\newtheorem{thm}{Theorem}[section]
\newtheorem{lemma}[thm]{Lemma}
\newtheorem{prop}[thm]{Proposition}
\theoremstyle{definition}
\newtheorem{remark}[thm]{Remark}
\theoremstyle{remark}
\DeclareMathOperator{\Fix}{Fix}
\DeclareMathOperator{\Homeo}{Homeo}
\DeclareMathOperator{\dist}{dist}
\newcommand{\R}{\mathbb R}
\newcommand{\F}{{\cal F}}
\newcommand{\Z}{\mathbb Z}
\def\ti{\tilde}
\def\sl3z{SL(3, \mathbb Z)}
\def\L{{\cal L}}
\def\M{{\cal M}}
\title{Complete semi-conjugacies for psuedo-Anosov homeomorphisms}
\author{John Franks\thanks{Supported in part by NSF grant DMS-055463.} ,\ \ 
Michael Handel\thanks{Supported in part by NSF grant DMS0103435.}}
\begin{document}
\maketitle
\begin{abstract}

Suppose $S$ is a surface of genus $\ge 2 $, \ $f: S \to S$ is a surface
homeomorphism isotopic to a pseudo-Anosov map $\alpha$ and suppose
$\ti S$ is the universal cover of $S$ and $F$ and $A$ are lifts of $f$
and $\alpha$ respectively.  A result of A. Fathi shows
there is a semiconjugacy $\Theta :
\ti S \to \bar \L^s \times \bar \L^u$ from $F$ to $\bar A$, where
$\bar \L^s$ ($\bar \L^u$) is the completion of the $R$-tree of leaves
of the stable (resp. unstable) foliation for $A$ and $\bar A$ is the
map induced by $A$.

We generalize a result of Markovich and show that   for any 
$g \in \Homeo(S)$  that commutes with $f$ and is isotopic to the identity with identity lift $G$ and for any $(c,w)$ in the image of $\Theta$
each component of $\Theta^{-1}(c,w)$ is $G$-invariant.
\end{abstract}

\section{Introduction}

Suppose that $S$ is a closed surface and that $\alpha:S \to S$ is
either an orientation preserving linear Anosov map of $T^2$ or an
orientation preserving pseudo-Anosov homeomorphism of a higher genus
surface.  In the former case note that $\alpha$ fixes the point $e$
that is the image of $(0,0)$ in the usual projection of $\R^2$ to
$T^2$.  The first author \cite{franks:semiconjugacy} proved that if $f
\in \Homeo(T^2)$ is isotopic to $\alpha$ and fixes $e$ then there is a
unique map $p:T^2 \to T^2$ that fixes $e$, is isotopic to the identity
and that semi-conjugates $f$ to $\alpha$; i.e $pf = \alpha p$.
   
To describe this case further we work in the universal cover $\R^2$ of
$T^2$.  Let $A,F$ and $P$ be the lifts of $ \alpha,f$ and $p$
respectively that fix $(0,0)$ and note that $PF^k = A^kP$ for all $k
\in \Z$ (because $PF^k((0,0)) = A^kF((0,0)$). We say that the $F$-orbit
of $\ti y \in \R^2$ {\em shadows} the $A$-orbit of $\ti x \in \R^2$ if
$\ti d(F^k(\ti y), A^k(\ti x)) \le C$ for some $C$ and all $k \in \Z$;
we also say that the $f$-orbit of $y \in T^2$ {\em globally shadows}
the $\alpha$-orbit of $x \in T^2$.  Since $p$ is homotopic to the
identity, there exists $C > 0$ so that $\dist(P(\ti y), \ti y) < C$
for all $\ti y \in \R^2$.  In particular, $\dist(F^k(\ti y), A^k(P(\ti y)) =  \dist(F^k(\ti y), PF^k(\ti y)) < C$ for all $k$ so   the $F$-orbit of $\ti y$
shadows the $A$-orbit of $P(\ti y)$ for all $\ti y \in \R^2$.  It is
well known that no two $A$-orbits shadow each other so $P$ is
completely determined by this shadowing property.  The surjectivity of
$P$ reflects the fact that every $A$-orbit is shadowed by some
$F$-orbit.  The fact that $P$ is defined on all of $\R^2$ reflects the
fact that every $F$-orbit shadows some $A$-orbit.
   
Suppose now that $\alpha$ is pseudo-Anosov, that $f$ is isotopic to
$\alpha$ and that $A: \ti S \to \ti S$ is a lift of $\alpha$.  The
isotopy between $\alpha$ and $f$ lifts to an isotopy between $A$ and a
lift $F : \ti S\to \ti S$ of $f$.  Equivalently, $F$ is the unique
lift of $f$ that induces the same action on covering translations as
$A$.  Let $d(x,y)$ be any path metric on $S$ and let $\ti d(\ti x, \ti
y)$ be its lift to $\ti S$.  Shadowing in $\ti S$ and global shadowing
in $S$ are defined as above using $\ti d$ in place of the Euclidean
metric on $\R^2$.  It is not hard to construct examples
(c.f. Proposition~ 2.1 of \cite{han:entropy}) for which there are
$F$-orbits that are not shadowed by any $A$-orbit.  On the other hand, the second author proved  \cite{han:semiconjugacy} that
every $A$-orbit is shadowed by some $F$-orbit.  More precisely, there exists a closed $f$-invariant set $Y \subset S$  with full pre-image denoted $\ti Y \subset \ti S$  
and a continuous equivariant  surjection $P: \ti Y \to \ti S$ such that $PF = AP$.  

The $F$-orbits of $\ti S \setminus \ti Y$ do not shadow $A$-orbits.
It is natural to ask if there is some larger context in which one can
understand these orbits.  Fathi \cite{Fa} answered this  by considering leaf
spaces of the stable and unstable foliations.

In the Anosov case, the hyperbolic linear map $A$ has stable and
unstable invariant foliations $\F^s$ and $\F^u$ consisting of straight
lines parallel to the eigenvectors of $A$.  Let $\L^s$ and $\L^u$ be
the leaf spaces of $\F^s$ and $\F^u$ respectively.  Then $\L^s$ and
$\L^u$ can be identified with $\R$ in such a way that $A$ induces
homotheties $A^s :\L^s \to \L^s$ and $A^u :\L^u \to \L^u$ defined
respectively by $x \to \lambda x$ and by $x \to x/\lambda$ where
$\lambda>1$ is an eigenvalue of $A$.  Thus $(A^s,A^u):\L^s \times \L^u
\to \L^s \times \L^u$ is naturally identified with $A : \R^2 \to \R^2$
and the shadowing argument that defines $P$ can be done in this
product of leaf spaces.

In the pseudo-Anosov case the stable and unstable foliations $\F^s$
and $\F^u$ have singularities and their leaf spaces $\L^s$ and $\L^u$
are more complicated. Namely, $\L^s$ and $\L^u$,
and their metric completions $\bar \L^s$ and $\bar \L^u$ have the
structure of $\R$-trees \cite{morganShalen}.  As in the previous case $A$ induces a
homothety $A^s :\bar \L^s \to \bar \L^s$ that uniformly expands
distance by a factor $\lambda>1$ and a homothety $A^u :\bar \L^u \to
\bar \L^u$ that uniformly contracts distance by the factor
$1/\lambda$.  Let $\bar A = (A^s,A^u ) :\bar \L^s \times \bar \L^u \to \bar \L^s \times \bar \L^u$.
 
Let $Q _s: \ti S \to \L^s$ be the natural map that sends a point in
$\ti S$ to the leaf of $\F^s$ that contains it. Define $Q _u$
similarly and let $Q=Q_s \times Q_u : \ti S \to \L^s \times \L^u$.
Denote the subset of $\L^s \times \L^u$ consisting of pairs of leaves
of $\F^s$ and $\F^u$ that have a point in common by $\Delta$.    Then $Q(\ti S)=\Delta$, and $Q : \ti S \to
\Delta$ is a homeomorphism.  Moreover, the following diagram
 commutes. $$
 \xymatrix{
\ti Y   \ar[rr]^F \ar[dd]_P && \ti Y \ar[dd]^P\\&&\\
\ti S   \ar[rr]^A \ar[dd]_Q&& \ti S \ar[dd]^Q\\&&\\
 \Delta \ar[rr]_{\bar A|_\Delta} &&  \Delta \
}
$$

 Fathi  \cite{Fa} extended  $QP$ to a non-surjective
map $\Theta: \ti S \to \bar \L^s \times \bar \L^u$ that makes the following diagram commutes.
$$
 \xymatrix{
\ti S   \ar[rr]^F \ar[dd]_\Theta && \ti S \ar[dd]^\Theta\\&&\\
\bar \L^s \times \bar \L^u \ar[rr]_{\bar A} && \bar \L^s \times \bar \L^u \
}
$$
 See also \cite{RHU} for more details on the map $\Theta$.

 The maps $p :T^2
\to T^2$ and $\Theta:S \to \bar \L^s \times \bar \L^u$ depend
canonically on $f$ and so determine canonical decompositions
$\{p^{-1}(x)\}$ of $T^2$ and $\{\Theta^{-1}(c,w)\} $ of $S$.  If $g$
commutes with $f$ then one expects $g$ to preserve this decomposition.
If $g$ is isotopic to the identity then one might even expect $g$ to
setwise preserve each element of the decomposition. 

Implicit in \cite{mark} is an even stronger and more
surprising fact for    the Anosov
case.  Namely that if $g$ commutes with $f$ and is isotopic
to the identity then $g$ setwise preserves each {\em component} of the
decomposition.  It is not hard to see that this decomposition is upper
semi-continuous and that each element is cellular. 
       
  \begin{thm} {\bf (Markovich)} \label{Markovich}  
Suppose that $\alpha:T^2 \to T^2$ is a linear Anosov map, that $f\in
\Homeo(T^2)$ is isotopic to $\alpha$ and fixes $e$ and that $p:T^2 \to
T^2$ is the unique map that fixes $e$ and satisfies $pf = \alpha p$.
If $g \in \Homeo(T^2)$ commutes with $f$, is isotopic to the identity
and setwise preserves $p^{-1}(e)$ then each component of $p^{-1}(x)$ is $g$-invariant for all $x \in T^2$.
\end{thm}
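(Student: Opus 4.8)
The plan is to work entirely in the universal cover $\R^2$ and exploit the rigidity of the linear map $A$. Let $G$ be the lift of $g$ that is equivariantly homotopic to the identity, so $G$ commutes with all covering translations and $GF = FG$ lifts $gf = fg$; likewise $P F = A P$ with $P$ the canonical lift described in the introduction. Because $G$ is homotopic to the identity there is a constant $K$ with $\dist(G(\ti x),\ti x) < K$ for all $\ti x$. The key observation is that for any $\ti x \in \R^2$ the $F$-orbit of $G(\ti x)$ stays a bounded distance from the $F$-orbit of $\ti x$: indeed $\dist\big(F^k(G\ti x), F^k(\ti x)\big) = \dist\big(G(F^k \ti x), F^k \ti x\big) < K$ for all $k \in \Z$, using $G F^k = F^k G$. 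Combining this with the shadowing property of $P$ (the $F$-orbit of $\ti y$ shadows the $A$-orbit of $P(\ti y)$ with a uniform constant $C$), the $A$-orbits of $P(G \ti x)$ and $P(\ti x)$ shadow each other, hence are equal since distinct $A$-orbits never shadow each other. Thus $P G (\ti x)$ and $P(\ti x)$ lie on the same $A$-orbit, i.e. $P G = A^{n(\ti x)} P$ on a piece of $\R^2$ for some integer-valued function $n$; since the fibers of $P$ are connected (cellular) and $n$ is locally constant, $n$ is constant on fibers, and because $G$ setwise preserves $P^{-1}(e)$ we get $n = 0$ on that fiber, hence $P G = P$ there.

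The next step is to upgrade $PG = P$ on one fiber to $PG = P$ everywhere, which is the main obstacle. Here I would use that $G$ commutes with $F$ and with covering translations to propagate the identity $PG = P$ around. Concretely, the function $\ti x \mapsto n(\ti x)$ defined by $P G (\ti x) = A^{n(\ti x)} P(\ti x)$ satisfies $n(F \ti x) = n(\ti x)$ and $n(T \ti x) = n(\ti x)$ for every covering translation $T$ (because $G$ commutes with $F$ and $T$, and $A$ commutes with $T$, and $A$ conjugates $F$-orbits to $A$-orbits compatibly). Since $n$ is locally constant on the connected set $\R^2$ — it factors through the decomposition $\{P^{-1}(x)\}$, each element of which is connected, and the quotient $T^2$ is connected — $n$ is globally constant, and its value is $0$ by the previous paragraph. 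Therefore $PG = P$ on all of $\R^2$, equivalently $p^{-1}(x)$ is $G$-invariant set-theoretically for every $x$; descending to $T^2$, $g$ preserves each fiber $p^{-1}(x)$.

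Finally, to get the sharper conclusion that $g$ preserves each \emph{component} of $p^{-1}(x)$ — which is only a real strengthening when some fiber is disconnected — I would note that the decomposition $\{p^{-1}(x)\}$ of $T^2$ is upper semi-continuous with cellular elements, as remarked in the introduction, so the quotient $T^2 / \{p^{-1}(x)\}$ is again $T^2$ and the components of a single fiber are permuted by $g$ but cannot be genuinely permuted: $g$ is isotopic to the identity, hence acts trivially on $H_*(T^2)$ and on the set of ends / complementary regions, and a nontrivial permutation of the finitely many components of a cellular continuum by an orientation-preserving homeomorphism isotopic to the identity is incompatible with $g$ fixing the cellular fiber setwise while commuting with the dynamics $f$, which expands transversally. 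I would make this last point precise by passing to a small $g$-invariant neighborhood of $p^{-1}(x)$, using the cellularity to see the components are ``unlinked,'' and invoking the fact that an isotopy-to-identity homeomorphism fixing a cellular set cannot permute its components nontrivially without creating a periodic orbit structure forbidden by commutation with the Anosov $f$. The heart of the argument, and where I expect to spend the most effort, is this last combinatorial/dynamical step controlling the permutation action of $g$ on the components of a fiber; the shadowing part is essentially the classical argument of Franks recalled in the introduction.
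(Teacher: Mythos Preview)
Your first two paragraphs essentially recover $pg=p$, which is the easy half. Two quibbles there: on $T^2$ the commutator $[F,G]$ is a covering translation, but the deck group is abelian, so this does not force $FG=GF$; you must choose the lift of $g$ for which $PG=P$, and then check that \emph{that} lift commutes with $F$. Also, the detour through an integer $n(\ti x)$ is unnecessary (the two--sided shadowing bound gives $PG(\ti x)=P(\ti x)$ directly) and is internally inconsistent: you invoke ``fibers of $P$ are connected (cellular)'' to make $n$ constant, but if fibers were connected there would be nothing further to prove about components.

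The genuine gap is your third paragraph, which is the entire content of the theorem, and which you yourself flag as the part needing ``the most effort.'' None of the devices you list --- triviality on $H_*(T^2)$, cellularity, ``unlinked'' components, a vague ``periodic orbit structure forbidden by commutation with $f$'' --- gives a mechanism preventing $g$ from permuting the components of a fiber. There is no general fact of the kind you are reaching for.

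The paper's argument (given in full for the pseudo-Anosov case and stated to adapt directly here) is entirely different and genuinely topological. One thickens the fiber to $V_n\cap H_n$, where $V_n=\theta_s^{-1}(N_{\epsilon_n}(c))$ and $H_n$ is the unstable analogue, and proves that $V_n$ is open, simply connected and unbounded, as an increasing union of sets $F^{-k}(Y_k)$ with $Y_k$ a convex union of stable leaves. Compactifying $\ti S$ to $S^2$ by adding $\infty$, the prime-end compactification $\widehat V$ of $V_n$ is a disk on which $G$ acts, and item~(ii) of that lemma shows $\widehat G$ fixes the prime end $\widehat\infty$. Only finitely many components of $V_n\cap H_n$ meet the fiber, so $G^m(K_n)=K_n$ for some least $m$. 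If $m>1$ one runs a ray in $H_{n+1}$ from $K_n$ to the frontier of $V_n$ to get an accessible prime end $\hat z$, joins $\hat z$ to $\widehat G^m(\hat z)$ by an arc $\widehat\beta$ with interior in $K_n$, and observes that $\widehat\beta$ and $\widehat G(\widehat\beta)$ have linked endpoints on the arc $\partial D\setminus\{\widehat\infty\}$, forcing an interior intersection and contradicting $G(K_n)\cap K_n=\emptyset$. This prime-end/linking step is the heart of the proof and is absent from your proposal.
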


The main result of this paper is the following extension of
Markovich's result to the pseudo-Anosov case.  If $g$ is isotopic to
the identity then the {\em identity lift} $G$ of $g$ is the unique
lift that commutes with all covering translations.

\begin{thm} \label{pA Markovich}  
Suppose that $\alpha:S\to S$ is pseudo-Anosov, that $f\in \Homeo(S)$
is isotopic to $\alpha$ and that $A,F : \ti S \to \ti S$ and   $\Theta:\ti S \to \bar \L^s \times \bar \L^u$ are as above.   If $g \in \Homeo(S)$ commutes
with $f$ and is isotopic to the identity then the identity lift $G : \ti S \to \ti S$ of $g$ commutes with $F$ and preserves each component of
$\Theta^{-1}(c,w)$  for all $(c,w) \in \Theta(\ti S)$.  \end{thm}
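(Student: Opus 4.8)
\noindent\textit{Plan of proof.}\quad Let $\Gamma$ denote the group of covering translations of $\ti S\to S$. By construction $F$ and $A$ induce the same automorphism of $\Gamma$, while the identity lift $G$ commutes with every element of $\Gamma$. To see that $G$ commutes with $F$, observe that $GF$ and $FG$ are both lifts of $gf=fg$, so $GF=T\circ FG$ for a unique $T\in\Gamma$. Conjugating an arbitrary $\eta\in\Gamma$ by $GF$ gives $G(F\eta F^{-1})G^{-1}=F\eta F^{-1}$, using that $F\eta F^{-1}\in\Gamma$ and that $G$ centralizes $\Gamma$; conjugating by $T\circ FG$ gives $T(F\eta F^{-1})T^{-1}$. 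Hence $T$ commutes with $F\eta F^{-1}$ for every $\eta\in\Gamma$, and since $\eta\mapsto F\eta F^{-1}$ is an automorphism of $\Gamma$, $T$ lies in the center of $\Gamma$; as $S$ has genus $\ge 2$ this center is trivial, so $T=\mathrm{id}$, $GF=FG$, and therefore $GF^{k}=F^{k}G$ for all $k\in\Z$.

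\smallskip

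\noindent Next I would show $\Theta\circ G=\Theta$. Since $g$ is isotopic to the identity, lifting an isotopy from $\mathrm{id}_S$ to $g$ and using compactness of $S\times[0,1]$ shows the identity lift $G$ has bounded displacement: there is $L>0$ with $\ti d(G(\ti x),\ti x)\le L$ for all $\ti x$. Because $\Theta$ is continuous and $\Gamma$-equivariant and $S$ is compact, $\Theta$ is uniformly continuous in the sense that there is $M>0$ with $d_{\bar\L^{s}\times\bar\L^{u}}(\Theta(\ti y),\Theta(\ti y'))\le M$ whenever $\ti d(\ti y,\ti y')\le L$. Fix $\ti x$ and apply this to the pairs $\big(F^{k}(\ti x),\,G(F^{k}(\ti x))\big)=\big(F^{k}(\ti x),\,F^{k}(G(\ti x))\big)$, which satisfy $\ti d\le L$ for every $k\in\Z$. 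Using $\Theta F^{k}=\bar A^{k}\Theta$ together with the hypotheses that $A^{s}$ expands $\bar\L^{s}$ by the factor $\lambda>1$ and $A^{u}$ contracts $\bar\L^{u}$ by $1/\lambda$, the $\bar\L^{s}$-coordinates of $\Theta(\ti x)$ and $\Theta(G(\ti x))$ must agree (let $k\to+\infty$) and the $\bar\L^{u}$-coordinates must agree (let $k\to-\infty$), so $\Theta(G(\ti x))=\Theta(\ti x)$. The same argument for $g^{-1}$ (whose identity lift is $G^{-1}$) shows $G$ carries each fiber $\Theta^{-1}(c,w)$ \emph{onto} itself, hence permutes its components; thus the decomposition $\D$ of $\ti S$ into components of fibers of $\Theta$ is invariant under $G$, under $F$ (since $\Theta F=\bar A\,\Theta$ with $\bar A$ a bijection), and under $\Gamma$.

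\smallskip

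\noindent It remains to upgrade ``$G$ preserves each fiber'' to ``$G$ preserves each component of each fiber'': this is the crux, and the pseudo-Anosov analogue of Markovich's refinement. Granting that the elements of $\D$ are cellular and that $\D$ is upper semicontinuous (the pseudo-Anosov counterpart of the facts recorded for the torus; see \cite{RHU}), the quotient $W:=\ti S/\D$ is homeomorphic to $\rtwo$, and $G$, $F$, $\Gamma$ descend to commuting homeomorphisms $\hat G$, $\hat F$ and a cocompact properly discontinuous $\Gamma$-action; the conclusion to be proved is exactly that $\hat G=\mathrm{id}_{W}$. Since $G$ has bounded displacement in $\ti S$ and the elements of $\D$ are compact, the collapse identifies $\partial_{\infty}\ti S$ with the circle at infinity of $W$, and $\hat G$ acts as the identity on it. If $\hat G$ moved a point it would carry a component $\kappa$ of some fiber $\Theta^{-1}(c,w)$ onto a disjoint component $\kappa'$ of the \emph{same} fiber; one would then examine the forward and backward $\hat F$-iterates $\hat F^{k}(\kappa)$, $\hat F^{k}(\kappa')$, which are components of $\Theta^{-1}(\bar A^{k}(c,w))$ and are pinched together by the expansion/contraction of $\bar A$, and derive a contradiction from the combination of: $\hat G$ commutes with $\hat F$, $\hat G$ has bounded displacement, $\hat G$ fixes the circle at infinity pointwise, and the planar separation properties of the $\D$-elements. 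Carrying out this step — in effect ruling out any nontrivial monodromy of $\hat G$ on the set of components of a fiber — is where the bulk of the work lies, and I expect it to be the main obstacle.
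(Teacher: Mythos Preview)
Your first two steps --- that $G$ commutes with $F$ and that $\Theta\circ G=\Theta$ --- are correct and coincide with the paper's arguments (the paper phrases the first as ``$[F,G]$ is a covering translation commuting with all covering translations, hence trivial'', but this is your argument). The genuine gap is the third step, which you yourself flag as the main obstacle: you propose to collapse the decomposition $\D$ into components of fibers and argue on the quotient $W$, but this requires that $\D$ be upper semicontinuous with cellular elements, which the paper does not establish in the pseudo-Anosov setting and which you simply grant; and even assuming $W\cong\R^2$ with $\hat G$ fixing the circle at infinity, your outline for forcing $\hat G=\mathrm{id}_W$ (``examine $\hat F^k(\kappa),\hat F^k(\kappa')$ \dots derive a contradiction'') supplies no concrete mechanism. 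A plane homeomorphism of bounded displacement fixing $\partial_\infty$ pointwise need not be the identity, so something specific to the structure of $\D$ must be used, and you have not said what.

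The paper's route avoids Moore-type decomposition theory entirely and works directly in $\ti S$. The key input is Lemma~\ref{epsilon neighborhood is nice}: for each $c\in\L^s$ and $\epsilon>0$ the set $V(c,\epsilon)=\theta_s^{-1}(N_\epsilon(c))$ is open, connected, simply connected and unbounded, and contains a properly embedded ray $R$ that is properly homotopic to $G(R)$ inside $V(c,\epsilon)$. One then passes to the one-point compactification $S^2$ of $\ti S$ and takes the prime end compactification $\widehat V$ of $V=V_n:=V(c,\epsilon_n)$; the ray $R$ determines a prime end $\widehat\infty\in\partial\widehat V$ fixed by $\widehat G$. With $H_n:=\theta_u^{-1}(N_{\epsilon_n}(w))$, only finitely many components of $V_n\cap H_n$ meet $\Theta^{-1}(c,w)$, so some least $m\ge 1$ satisfies $G^m(K_n)=K_n$ for the component $K_n$ containing a given $\Lambda$. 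If $m>1$ and $G(\bar K_n)\cap\bar K_n=\emptyset$, a ray in $H_{n+1}$ starting in $K_n$ first exits $K_n$ through $\fr(V_n)$ at an accessible point $z$, yielding a prime end $\hat z$; joining $\hat z$ to $\widehat G^m(\hat z)$ through $K_n$ gives an arc $\widehat\beta\subset\widehat V$, and the cyclic order of $\hat z,\widehat G(\hat z),\widehat G^m(\hat z),\widehat G^{m+1}(\hat z)$ on $\partial\widehat V\setminus\{\widehat\infty\}$ forces $\widehat G(\widehat\beta)$ to cross $\widehat\beta$ in its interior, contradicting $G(K_n)\cap K_n=\emptyset$. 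This prime-end linking argument is the concrete contradiction your sketch is missing.
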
 
 
     Our
proof of Theorem~\ref{pA Markovich}  makes use of arguments from
\cite{mark}.  We give complete details for the reader's convenience and   because the arguments from
\cite{mark} are not easily referenced.     It is straightforward to modify our proof of  Theorem~\ref{pA Markovich} to obtain a proof of   Theorem~\ref{Markovich}.  We leave that to the interested reader.

\section{Proof of Theorem~\ref{pA Markovich}}

We assume throughout this section  that  $\alpha:S \to S$ is   pseudo-Anosov with expansion factor $\lambda >1$,    that $f \in \Homeo(S)$ is isotopic to $\alpha$, and that $A: \ti S \to \ti S$ and $F: \ti S \to \ti S$ are   lifts of $\alpha$ and $f$ that induce the same action on covering translations.  
 
The transverse measures on the lifts $\ti \F^s$ and $\ti \F^u$  of  the stable and  unstable measured foliations  $\F^s$ and $\F^u$ for $\alpha$   determine  pseudo-metrics $d^u$  and $d^s$ on $\ti S$ such that 
\begin{equation}\label{eqn1}
d^u(A(x),A(y))  =  \lambda d^u(x,y) \text{ and }
d^s(A(x),A(y)) = \lambda^{-1}d^s(x,y).
\end{equation}
for  all  $x,y  \in \ti  S$.  Moreover,  $d^u(x,y) = 0$ [resp. $d^s(x,y) = 0$]  if and only if $x$ and $y$ belong to the same leaf of $\ti \F^s$ [resp. $\ti \F^u$].   There is also \cite{bers} a singular Euclidean metric $d$ on $\ti S$ 
such that
\[
d(x,y) = \sqrt{d^s(x,y)^2 + d^u(x,y)^2}
\]
 on each standard Euclidean
chart (i.e. one without singularities).   In particular,
$d(x,y)\le 2 \max\{d^s(x,y),d^u(x,y)\}$.
 
For any $x,y \in \ti S$ there is a unique (up to parametrization)
path $\rho$ with endpoints $x$ and $y$ and with length equal to
$d(x,y)$.  Subdividing $\rho$ at the singularities that it intersects
decomposes $\rho$ into a concatenation of linear subpaths.  We will
refer to $\rho$ as the {\em geodesic joining $x$ to $y$}.  If the
intersection of two geodesics $\gamma_1$ and $\gamma_2$ is more than a single point, then it is a path whose endpoints are either singularities or endpoints of  $\gamma_1$ or $\gamma_2$.    Any leaf without singularities of
either foliation is a geodesic, as is any embedded copy of $\R$ in a
leaf with singularities.

\begin{remark}  \label{du smaller than d}Since the geodesic  
$\gamma$ joining $x$ to $y$ (minimizing length as measured
by $d$) consists of a finite collection of segments whose
interiors lie in Euclidean charts we can conclude
$\max\{d^s(x,y),d^u(x,y)\} \le d(x,y) \le 2 \max\{d^s(x,y),d^u(x,y)\}$.  
\end{remark}

We denote the leaf of $\ti \F^s$ that contains $x$ by $W^s(x)$, the leaf space of $\ti \F^s$ by $\L^s$  and the image of $W^s(x)$ in $\L^s$ by $Q_s(x))$.  Thus $Q_s^{-1}(Q_s(x)) = W^s(x)$.       Moreover,
$d^u$ induces a metric on $\L^s$ (which we also denote $d^u$)
by setting $d^u(Q_s(x_1), Q_s(x_2)) = d^u(x_1, x_2)$.  This
is easily seen to be independent of the choice of
$x_1$ and $x_2.$  
This metric gives $\L^s$ 
the structure of an $\R$-tree \cite{morganShalen}.  As illustrated in Remark~\ref{incomplete}, $\L^s$ is not complete.  We denote the metric completion by $(\bar \L^s, d^u)$

\begin{remark}  \label{incomplete} One can construct a non-converging Cauchy sequence $L_i =Q_s(x_i)$  in $\L^s$ as follows.  Choose a singularity  $\ti x_0 \in \ti S$,  a stable ray  $R_0$ initiating    at $x_0$ and a sequence $\epsilon_i >0$ whose sum is finite.   Assuming inductively that $x_i$ and $R_i$ have been defined, choose a singularity  $x_{i+i} \in \ti S$  such that $W^u(x_{i+1}) \cap R_i \ne \emptyset $ and such that $d^u(x_{i+1},x_i) < \epsilon_{i+1}$.  Let $R_{i+1}$ be a  stable ray  in $W^s(x_{1+1})$ that  initiates at $x_{i+1}$  and whose interior is     contained in a  component of the complement of $W^u(x_{i+1})$ that is disjoint from $x_i$.    By construction, $d^u(L_i, L_j)  = \epsilon_{i+1} + \dots + \epsilon_j$  for all $i \le j$  and there are no accumulation points of the $W^s(x_i)$'s.   The former shows that  the $L_i$'s are a Cauchy sequence and the latter implies that this sequence is non-convergent.
\end{remark}

 Similarly $\L^u$, the space
of unstable leaves in $\ti S$ is an $\R$-tree with metric
$d^s$   and $Q_u : \ti S \to \L^u$ satisfies $Q_u^{-1}(Q_u(x)) = W^u(x)$.   The metric completion of $\L^u$  is  $(\bar \L^u, d^s)$ and we use the metric  $\bar d = \max\{d^u, d^s\}$ on $ \bar \L^u \times \bar \L^s$.
Define $Q : \ti S \to \L^u \times \L^s$   by $Q(x) =
(Q^s(x), Q^u(x))$  and let $\Delta = Q (\ti S)$ equipped with the subset topology.   Note that $\Delta$ can be characterized as the subset  of $\L^s \times \L^u$ consisting
of pairs of leaves of $\F^s$ and $\F^u$ that have a point in common.

The pseudo-Anosov map $A$ on $\ti S$ induces a homeomorphism
$A_s : \L^s \to \L^s.$  From the properties of $d^s$ (equation 
(\ref{eqn1}) above) it is
clear that $A_s$ is an expanding homothety, i.e. for any
$L_1, L_2 \in \L^s$
\begin{equation}\label{eqn7}
d^u(A_s(L_1), A_s(L_2)) = \lambda d^u(L_1, L_2).
\end{equation}
The map $A_u : \L^u \to \L^u$ is defined analogously and has similar
properties.  Likewise if $T$ is a covering translation on $\ti S$ it
induces an isometry $T_s : \L^s \to \L^s.$ It is clear that $A_s$ and
$A_u$ extend to expanding and contracting homotheties respectively of
$\bar \L^s$ and $\bar \L^u$ which we also denote by $A_s$ and $A_u$.
Also $T_s$ extends to an isometry of $\bar \L^s.$ The map $T_u : \L^u
\to \L^u$ is defined analogously and has similar properties.  Let
$\bar A : \bar \L^s \times \bar \L^u \to \bar \L^s \times \bar \L^u$
be the map $A_s \times A_u$.

As a consequence of Fathi's theorem we have the following.

\begin{prop} \label{leaf_semiconj} 
There exists a unique continuous map
$\theta_s: \ti S \to \bar \L^s$ satisfying
 \begin{enumerate}
\item [(i)] There is a commutative diagram
\[
 \xymatrix{
\ti S   \ar[rr]^F \ar[dd]_{\theta_s} && \ti S \ar[dd]^{\theta_s}\\&&\\
\bar \L^s  \ar[rr]_{A_s} && \bar \L^s \
}
\]
\item [(ii)]  There exists $C_1>0$ such that
$d^u(Q_s(x),\theta_s(x)) <C_1$ for all $x \in \ti S.$
 \item [(iii)]  For all $x \in \ti S,$
\ $\theta_s(x)$ is the unique point 
in $\bar \L^s$ with
the property that there exists $C_2>0$ such that
$d^u(Q_s(F^k(x)),A_s^k \theta_s(x)) <C_2$ for all $k \ge 0.$
\item [(iv)]   $\theta_s(T(x)) = T_s(\theta_s(x))$ for all $x
\in \ti S$ and each covering transformation $T$.
\item [(v)] The image of $\theta_s$ contains $\L^s.$
 \end{enumerate}
\end{prop}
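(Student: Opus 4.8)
\emph{Proof proposal.} Although the proposition is a consequence of Fathi's theorem (it is the first coordinate of the map $\Theta$ from the introduction), I would give a self-contained proof by repeating, inside the $\R$-tree $\bar\L^s$, the averaging construction used for the linear Anosov map in the introduction. The one geometric input needed is that $F$ stays a bounded $d^u$-distance from $A$: the isotopy from $\alpha$ to $f$ lifts to an isotopy from $A$ to $F$ whose tracks have uniformly bounded $d$-length (by compactness of $S$), so there is $D>0$ with $d(F(y),A(y))\le D$, hence $d^u(F(y),A(y))\le D$ by Remark~\ref{du smaller than d}, for all $y\in\ti S$. Since $Q_sA=A_sQ_s$ and $d^u(Q_s(x_1),Q_s(x_2))=d^u(x_1,x_2)$, this gives $d^u\bigl(Q_s(F(y)),A_s(Q_s(y))\bigr)\le D$ for all $y$.

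For $x\in\ti S$ and $k\ge0$ put $z_k(x)=A_s^{-k}Q_s(F^k(x))\in\bar\L^s$. Combining the displacement bound with (\ref{eqn7}) gives $d^u(z_{k+1}(x),z_k(x))\le\lambda^{-(k+1)}D$, so $\{z_k(x)\}$ is Cauchy in the complete space $\bar\L^s$; set $\theta_s(x)=\lim_k z_k(x)$. Summing the geometric tail, $d^u(z_k(x),\theta_s(x))\le\lambda^{-k}D/(\lambda-1)$; taking $k=0$ is (ii) with $C_1=D/(\lambda-1)$, and applying the homothety $A_s^k$ gives $d^u(Q_s(F^k(x)),A_s^k\theta_s(x))\le D/(\lambda-1)$ for all $k\ge0$, the existence half of (iii). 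The uniqueness half is the standard hyperbolicity argument: if $b$ also satisfies $d^u(Q_s(F^k(x)),A_s^kb)\le C$ for all $k\ge0$, then $\lambda^k d^u(\theta_s(x),b)=d^u(A_s^k\theta_s(x),A_s^kb)$ is bounded in $k$, so $b=\theta_s(x)$. This characterization also yields the asserted uniqueness of the map: any continuous map satisfying (i) and (ii) satisfies the hypothesis of (iii) at every point, hence equals $\theta_s$.

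The rest is bookkeeping. For (i), reindexing gives $\theta_s(F(x))=\lim_k A_s^{-k}Q_s(F^{k+1}(x))=A_s\bigl(\lim_k A_s^{-(k+1)}Q_s(F^{k+1}(x))\bigr)=A_s(\theta_s(x))$ by continuity of $A_s$. Each $z_k$ is continuous and $z_k\to\theta_s$ uniformly by the tail estimate, so $\theta_s$ is continuous. For (iv), with $A_\ast$ the common action of $A$ and $F$ on the deck group one has $F^kT=(A_\ast^kT)F^k$ and $(A_\ast^kT)_s=A_s^kT_sA_s^{-k}$ on $\L^s$, whence $z_k(T(x))=A_s^{-k}(A_\ast^kT)_sQ_s(F^k(x))=T_s(z_k(x))$; letting $k\to\infty$ and using that $T_s$ is a continuous isometry gives $\theta_s(T(x))=T_s(\theta_s(x))$. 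For (v), given $L\in\L^s$ pick, by surjectivity of the semiconjugacy $P:\ti Y\to\ti S$ of \cite{han:semiconjugacy}, a point $y\in\ti Y$ with $Q_s(P(y))=L$; the $F$-orbit of $y$ shadows the $A$-orbit of $P(y)$, so $d^u(Q_s(F^k(y)),A_s^kL)$ is bounded over $k\ge0$ and (iii) forces $\theta_s(y)=L$.

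The point needing the most care is the displacement bound and the shadowing estimate used in (v): the shadowing statements in the introduction are phrased for an arbitrary path metric on $S$, and one must transfer them to the singular flat metric $d$ (harmless, since lifts of path metrics on compact $S$ are quasi-isometric and $d^u\le d$), and one should check that the resulting $\theta_s$ really is the first coordinate of Fathi's $\Theta$ — which holds because $\theta_s$ agrees with $Q_s\circ P$ on $\ti Y$ and both maps are pinned down by property (iii). Everything else is the geometric-series estimate.
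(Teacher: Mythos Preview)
Your proof is correct. The paper's formal proof simply defines $\theta_s$ as the first coordinate of Fathi's $\Theta$ and cites \cite{Fa} for properties (i)--(v), but the Remark immediately following the proposition sketches exactly your direct construction (set $L_k=Q_s(A^{-k}F^k(x))$, show it is Cauchy in $\bar\L^s$, and take the limit); you have carried out in full the argument the authors leave to the reader, including the equivariance computation for (iv) and the use of the global-shadowing semiconjugacy $P$ from \cite{han:semiconjugacy} for (v).
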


\begin{proof}  
Let $\theta_s : \ti S \to \bar \L^s$ be the composition of
$\Theta$ with the projection of  $\bar \L^s \times \bar \L^u$
onto its first factor.  Then properties (i)-(v) follow from 
the corresponding properties of $\Theta$ as proved in \cite{Fa}.
\end{proof}

Let $\theta_u : \ti S \to \bar \L^u$ be the analogous semiconjugacy
from $F$ to $A_u : \bar \L^u \to \bar \L^u.$  Note then that
\[
\Theta(x) = (\theta_u(x), \theta_s(x)).
\]

\begin{remark} Proposition~\ref{leaf_semiconj} can be proved directly as follows.  Given $x \in \ti S$ let $z_k = A^{-k}F^k(x)$
and let $L_k = Q_s(z_k)$.  Then it is straight forward to show that
$\{L_k\}$ is a Cauchy sequence in the $d^u$ metric on $\bar \L^s$.
It's limit can be taken as the definition of $\theta_s(x)$.  Property
(i) is then immediate and it is not difficult to show $\theta_s$
is continuous and satisfies the other properties.
\end{remark}

We denote  open $\epsilon$-neighborhoods by  $N_{\epsilon}(\cdot)$. 
   By item (iii) of Proposition~\ref{leaf_semiconj} applied with $k =0$ there is a constant $C_2$ such that 
 \begin{equation}\label{secondExtended}
 d^u(Q_s(x), \theta_s(x)) <C_2
\end{equation}
 for all $x \in \ti S$.      Since $d^u(A(x), F(x))$ is bounded independently of $x$ and since $Q_s$ preserves $d^u$ we may also assume that
  \begin{equation}\label{fifthExtended}
d^u(Q_sF(x), A_sQ_s(x)) = d^u(Q_sF(x),Q_sA(x)) < C_2.
\end{equation}  
    Choose  $C > \max(C_2, C_2/(\lambda-1))$ and note that 
\begin{equation}\label{thirdExtended}
\lambda C - C_2 > C
\end{equation}
     
\begin{lemma}\label{bounded}  
$\Theta^{-1} (N_{\epsilon}(c) \times N_{\epsilon}(w))$ is a
bounded subset of $\ti S$ for all $\epsilon > 0$ and all $(c,w) \in
\L^s \times \L^u$.
\end{lemma}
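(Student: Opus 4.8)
The plan is to prove that $\Theta^{-1}(N_{\epsilon}(c) \times N_{\epsilon}(w))$ has bounded diameter with respect to the singular Euclidean metric $d$; since every $d$-ball in $\ti S$ is bounded, this gives the lemma. By Remark~\ref{du smaller than d} we have $d(x,y) \le 2\max\{d^s(x,y), d^u(x,y)\}$ for all $x,y$, so it suffices to bound $d^u(x,y)$ and $d^s(x,y)$ uniformly as $x,y$ range over the preimage. Recall also from the definition of the metric $d^u$ on $\L^s$ that $d^u(x,y) = d^u(Q_s(x),Q_s(y))$, and symmetrically $d^s(x,y) = d^s(Q_u(x),Q_u(y))$.

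Now fix $x,y \in \Theta^{-1}(N_{\epsilon}(c)\times N_{\epsilon}(w))$ (if the preimage is empty there is nothing to prove). Since $\Theta(x) = (\theta_u(x),\theta_s(x))$, both $\theta_s(x)$ and $\theta_s(y)$ lie in the $\epsilon$-ball about $c$ in $(\bar\L^s, d^u)$, so the triangle inequality gives $d^u(\theta_s(x),\theta_s(y)) < 2\epsilon$; likewise $d^s(\theta_u(x),\theta_u(y)) < 2\epsilon$ in $(\bar\L^u, d^s)$. Combining this with inequality~(\ref{secondExtended}), which asserts $d^u(Q_s(z),\theta_s(z)) < C_2$ for all $z \in \ti S$, and with its analogue $d^s(Q_u(z),\theta_u(z)) < C_2'$ for the unstable side, one gets
\[
d^u(x,y) = d^u(Q_s(x),Q_s(y)) \le d^u(Q_s(x),\theta_s(x)) + d^u(\theta_s(x),\theta_s(y)) + d^u(\theta_s(y),Q_s(y)) < 2C_2 + 2\epsilon,
\]
and symmetrically $d^s(x,y) < 2C_2' + 2\epsilon$. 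Hence $d(x,y) < 2\max\{2C_2 + 2\epsilon,\, 2C_2' + 2\epsilon\}$, a bound depending only on $\epsilon$, so $\Theta^{-1}(N_{\epsilon}(c)\times N_{\epsilon}(w))$ is bounded.

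There is no genuine obstacle here: the whole argument is the triangle inequality, the only substantive inputs being (a) that $\theta_s$ stays a bounded $d^u$-distance from $Q_s$ and $\theta_u$ a bounded $d^s$-distance from $Q_u$ (items (ii)--(iii) of Proposition~\ref{leaf_semiconj}, i.e.\ (\ref{secondExtended}) and its analogue), and (b) that bounded $d^u$- and $d^s$-separation forces bounded $d$-separation (Remark~\ref{du smaller than d}). The one point worth flagging is bookkeeping of which pseudo-metric lives on which factor, together with the trivial observation that the asserted bound is vacuous precisely when $N_{\epsilon}(c)\times N_{\epsilon}(w)$ misses $\Theta(\ti S)$.
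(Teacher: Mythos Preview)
Your proof is correct and follows essentially the same approach as the paper: both use the triangle inequality together with the uniform bound $d^u(Q_s(z),\theta_s(z))<C_2$ (and its unstable analogue) to control $d^u$ and $d^s$ on the preimage, then invoke $d\le 2\max\{d^u,d^s\}$. The only cosmetic difference is that the paper compares each point of the preimage to a fixed reference point in $Q_s^{-1}(c)$, whereas you compare two arbitrary preimage points directly.
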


\proof For $x \in Q_s^{-1}(c)$ and $z \in \Theta^{-1} (N_{\epsilon}(c)
 \times N_{\epsilon}(w))$ we have
\begin{align*}
d^u(x,z) 
&= d^u(Q_s(x),Q_s(z))\\
& \le
d^u(Q_s(x),\theta_s(z)) 
  + d^u(\theta_s(z),Q_s(z))\\
&\le  \epsilon +C_2.
\end{align*} 
   Symmetrically, if $\theta_u(y) = w$ then 
$$
d^s(y,z) \le  \epsilon +C_2.
$$ 
It follows that $d(z,z')\le 2 \max\{d^u(z,z'), d^s(z,z')\} 
\le 2(\epsilon + C_2)$ for all 
$z,z' \in \Theta^{-1} (N_{\epsilon}(c)
\times N_{\epsilon}(w))$.  \endproof
  
\begin{lemma} \label{epsilon neighborhood is nice}   
If $V(c,\epsilon)$ is defined to be $\theta_s^{-1}(N_\epsilon(c))$, then
\begin{enumerate}
\item [(i)] The set $V(c,\epsilon)$ is an
open, connected, simply connected, unbounded set for all $c\in \L^s$
and all $\epsilon > 0$.  
\item [(ii)] If $G: \ti S \to \ti S$ commutes with $F$ and
if there is a constant $C_1$ such that $d(x, G(x)) < C_1$ for all $x
\in \ti S$ then there is a ray $R$ that is properly embedded  in $\ti
S$ such that $R, G(R) \subset V(c,\epsilon)$ and such that $R$ is
properly homotopic to $G(R)$ in $V(c,\epsilon)$; i.e. there is a one
parameter family $R_t$ of rays in $V(c,\epsilon)$ such that $R_0 = R$
and $R_1 = G(R)$ and such that each $R_t$ is properly embedded in $\ti
S$.
\end{enumerate}
\end{lemma}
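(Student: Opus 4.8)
The plan is to establish (i) first, then bootstrap it to (ii). For (i), openness is immediate since $\theta_s$ is continuous and $N_\epsilon(c)$ is open. Unboundedness follows from surjectivity onto $\L^s$ together with a covering-translation argument: by item (v) of Proposition~\ref{leaf_semiconj}, $c \in \L^s$ is in the image, and item (iv) shows that $\theta_s$ is equivariant, so pushing a preimage point around by covering translations $T$ with $T_s$ fixing a sub-tree near $c$ — more simply, using the stable leaf through a preimage point, along which $\theta_s$ is constant and which is unbounded in $\ti S$ — gives an unbounded subset of $V(c,\epsilon)$. For connectivity and simple connectivity I would argue as follows. The fibers of $Q_s$ are the stable leaves $W^s(x)$, which are connected; and $\theta_s$ is ``close to'' $Q_s$ in the sense of \eqref{secondExtended}. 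The key geometric fact is that $V(c,\epsilon)$ should be saturated, up to bounded error, by stable leaves, so its topology is controlled by the interval structure of $\bar\L^s$. Concretely: if $z, z' \in V(c,\epsilon)$, then $Q_s(z)$ and $Q_s(z')$ both lie within $\epsilon + C_2$ of $c$ in the $\R$-tree $\bar\L^s$; the geodesic (arc) in $\bar\L^s$ from $Q_s(z)$ to $Q_s(z')$ stays in $N_{\epsilon+C_2}(c)$ by the tree (hence convex) structure, and I would lift this arc to a path in $\ti S$ — moving along stable leaves (where $\theta_s$ is constant) to change the ``position'' in $\bar\L^s$ and using the singular Euclidean structure to interpolate — producing a path in $V(c, \epsilon')$ for a slightly larger $\epsilon'$. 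To get honest connectivity of $V(c,\epsilon)$ itself rather than of a slightly fattened version, I expect one uses that $\theta_s$, being the projection of Fathi's $\Theta$, has preimages of sub-trees that are themselves ``tree-like'' decompositions of $\ti S$; I would cite or reprove that the decomposition $\{\theta_s^{-1}(\text{point})\}$ is upper semicontinuous with connected, simply connected (indeed non-separating) elements, so that $\theta_s^{-1}$ of any connected simply connected subset of the $\R$-tree $\bar\L^s$ — in particular an open metric ball $N_\epsilon(c)$ — is connected and simply connected. This is the cleanest route and is essentially the $R$-tree analogue of the cellularity statement made for the torus case in the introduction.

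For (ii), the hypothesis is exactly that $G$ moves points a bounded distance, which is the situation of the identity lift of a homeomorphism isotopic to the identity (and one checks separately, as Theorem~\ref{pA Markovich} asserts, that $G$ commutes with $F$). Since $d(x,G(x)) < C_1$ and $Q_s$ does not increase $d^u$ (Remark~\ref{du smaller than d}), we get $d^u(Q_s(x), Q_s(G(x))) < C_1$, and combining with \eqref{secondExtended} gives $d^u(\theta_s(x), \theta_s(G(x))) < 2C_2 + C_1 =: C_1'$ for all $x$; that is, $G$ displaces $\theta_s$-values by a bounded amount in $\bar\L^s$. Now take any properly embedded ray $R$ whose $\theta_s$-image converges to $c$ — for instance a sub-ray of a stable leaf through a point of $\theta_s^{-1}(c)$, along which $\theta_s \equiv c$. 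Then $\theta_s(R) = \{c\} \subset N_\epsilon(c)$ and $\theta_s(G(R)) \subset N_{C_1'}(c)$; after replacing $\epsilon$ by $\max(\epsilon, C_1'+1)$ — which only enlarges $V(c,\epsilon)$ and is harmless since we only need \emph{some} such ray — both $R$ and $G(R)$ lie in $V(c,\epsilon)$. For the proper homotopy, I would use part (i): $V(c,\epsilon)$ is open, connected and simply connected, hence homeomorphic to $\R^2$ (an open simply connected subsurface of $\ti S \cong \R^2$ is a plane); and in $\R^2$ any two properly embedded rays that are ``close at infinity'' — here $R$ and $G(R)$ have ends converging to the same point/end because $\dist(x, G(x)) < C_1$ uniformly — are properly isotopic. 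I would make ``close at infinity'' precise by noting $d(R(t), G(R)(t)) < C_1$ along a matched parametrization, so $R$ and $G(R)$ are at bounded Hausdorff distance and thus determine the same end of the plane $V(c,\epsilon)$; standard two-dimensional topology then yields the one-parameter family $R_t$ of properly embedded rays, with $R_t$ properly embedded in $\ti S$ because $V(c,\epsilon)$ is itself properly (indeed, as an open subset, at least continuously) embedded — here I should be careful and instead build $R_t$ by a direct ``fan'' construction inside $V(c,\epsilon)$ guaranteeing proper embeddedness in $\ti S$.

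The main obstacle I anticipate is the simple-connectivity (and connectivity) assertion in (i): it is the part that genuinely uses the fine structure of Fathi's semiconjugacy and the $\R$-tree geometry, rather than soft point-set topology. Everything in (ii) is then a bounded-distance bookkeeping argument plus planar topology, modulo the one subtlety of ensuring the interpolating rays $R_t$ remain properly embedded in $\ti S$ and not merely in $V(c,\epsilon)$; I would handle that by constructing the homotopy explicitly within a neighborhood of $R \cup G(R)$ inside $V(c,\epsilon)$, using the product-like structure coming from the stable foliation, so that each $R_t$ is a concatenation of a compact arc with a sub-ray of a stable leaf and hence visibly proper in $\ti S$.
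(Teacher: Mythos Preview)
Your proposal has a genuine gap, and it stems from a recurring confusion: $\theta_s$ is \emph{not} constant along stable leaves. Stable leaves are the fibers of $Q_s$, and $F$ does not preserve $\ti\F^s$, so for $y\in W^s(x)$ one only knows $d^u(\theta_s(y),\theta_s(x))\le 2C_2$ via \eqref{secondExtended}, not equality. This breaks your unboundedness argument for small $\epsilon$, your choice of ray $R$ in (ii) (the claim ``$\theta_s(R)=\{c\}$'' is false), and it is why your path-building in (i) only lands in $V(c,\epsilon')$ for some $\epsilon'>\epsilon$. Your fallback for (i)---citing that $\theta_s$-preimages of subtrees are connected and simply connected---is exactly the content of the lemma, so it is circular. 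And in (ii) you cannot ``replace $\epsilon$ by $\max(\epsilon,C_1'+1)$'': the statement is for every $\epsilon>0$, and the application in Theorem~\ref{pA Markovich} uses $\epsilon_n\to 0$.

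The missing idea is to use the dynamics to trade the approximate relation $d^u(Q_s,\theta_s)<C_2$ for an exact one. Set $Y_k=Q_s^{-1}\bigl(N_{\lambda^k\epsilon-C}(A_s^k(c))\bigr)$: this is a union of stable leaves, hence convex in the singular Euclidean metric and manifestly open, connected, simply connected, unbounded. Then put $X_k=F^{-k}(Y_k)$. Using \eqref{secondExtended}, \eqref{fifthExtended}, \eqref{eqn7} and \eqref{thirdExtended} one checks $X_k\subset X_{k+1}\subset V(c,\epsilon)$ and $\bigcup_k X_k=V(c,\epsilon)$; an increasing union of open simply connected planar sets is open, connected, simply connected (and unbounded since each $X_k$ is), which gives (i). For (ii), choose $k$ with $C_1<\lambda^k\epsilon-C$, take $R'$ a ray of $W^s(A_s^k(c))\subset Y_k$; then $G(R')\subset Y_k$ as well, and the geodesic straight-line homotopy between them stays in the convex set $Y_k$. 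Since $G$ commutes with $F$, the ray $R:=F^{-k}(R')$ and $G(R)=F^{-k}G(R')$ lie in $X_k\subset V(c,\epsilon)$ and the pulled-back homotopy is the required proper homotopy. The commutation $GF=FG$ is precisely what lets you manufacture, for \emph{any} $\epsilon$, a large-radius picture at level $k$ and then pull it back.
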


\proof    Define
$$
Y_k = \{x \in \ti S:  d^u( x, W^s(A^k(c)) < \lambda^k \epsilon - C\}
$$ 
or equivalently
$$
Y_k = Q_s^{-1}(N_{\lambda^k \epsilon - C}( A_s^k(c))).
$$ 
Then $Y_k$ is an open convex subset of $\ti S$ that is a union of
leaves of $\F^s$.  In particular, $Y_k$ is an open, connected, simply
connected, unbounded set. If $R_1(t)$ and
$R_2(t)$ are any two rays in $Y_k$ that are properly
embedded in $\ti S$ and if there is a constant $C_0$ such
that $d(R_1(t),R_2(t)) \le C_0$ for all $t$, then these rays
are properly homotopic in $Y_k$ (by a homotopy along geodesics).
 
Define
$$
X_k = F^{-k}(Y_k).
$$  
Thus
 \begin{align*}
X_k &= F^{-k}Q_s^{-1}(N_{\lambda^k \epsilon - C}( A_s^k(c)))\\
& =  \{x \in \ti S:  d^u(Q_sF^k(x), A_s^k(c)) < \lambda^k \epsilon - C\}.
\end{align*}
Since $F$ is a homeomorphism, each $X_k$ is an open, connected, simply
connected, unbounded set. Also if $R_1(t)$ and $R_2(t)$ are
properly embedded rays in $\ti S$, contained in $X_k$, 
for which there is a constant $C_0$ such that
$d(R_1(t),R_2(t)) \le C_0$, then these rays are properly homotopic in
$X_k$.

 If $x \in X_k$ then Equation~(\ref{secondExtended})  implies that $A_s^k\theta_s(x) = \theta_sF^k(x) \in N_{\lambda^k\epsilon}(A_s^k(c))$   and  so $\theta_s(x) \in N_\epsilon(c)$ by  Equation~(\ref{eqn7}).   This proves that $X_k \subset V(c,\epsilon)$.    Moreover, by the triangle inequality and Equations~(\ref{fifthExtended}), (\ref{eqn7}) and (\ref{thirdExtended}) we have
\begin{align*}
d^u(Q_sF^{k+1}(x), A_s^{k+1}(c)) &= d^u(Q_sF F^{k}(x), A_sA_s^{k}(c))\\
&\le d^u(A_sQ_s F^{k}(x), A_sA_s^{k}(c)) + C_2\\
&= \lambda d^u(Q_s F^{k}(x),A_s^{k}(c)) + C_2\\
&\le \lambda(\lambda^k\epsilon - C) + C_2\\
&= \lambda^{k+1} \epsilon -(\lambda C - C_2)\\
&\le  \lambda^{k+1} \epsilon - C
\end{align*}
which proves that $X_k \subset X_{k+1}$.

If  $\theta_s(w)\in N_\epsilon(c)$ then we may choose  $\delta < \epsilon$ and $k \ge 1$ so that  $\theta_s(w)\in N_\delta(c)$ and  so that     $\lambda^k\delta < \lambda^k \epsilon -C_2 -C$. 
 Then 
 $$
 \theta_sF^k(w)
= A_s^k\theta_s(w) \in N_{\lambda^k \epsilon -C-C_2}(A_s^k(c))
$$
 and  Equation~(\ref{secondExtended}) implies that     
 $$
 Q_sF^k(w)   \in N_{\lambda^k \epsilon-C}(A_s^k(c))
 $$
 and hence that $w \in X_k$.     We have now shown that
$V(c,\epsilon)$ is the increasing union  of open, connected, simply connected and unbounded sets $X_k$ thereby  completing
the proof of (i).

Choose $k \ge 1$ so that $C_1 < \lambda^k \epsilon -C$ and let $R'$ be
a ray in $W^s(A_s^k(c))$.  Then $R'$ and $ G(R') )$ are properly
embedded in $\ti S$ and are contained in $Y_k$ and $R := F^{-k}(R')$
and $G(R) = F^{-k}G(R')$ are properly embedded in $\ti S$ and are
contained in $X_k$.  Choosing $C_0$ so that  $d(G(x),x) \le C_0$ for all 
$x$, we conclude $d(R(t), G(R(t))) \le C_0$ and
as noted above, $R$ and $G(R)$ are properly
homotopic in $X_k \subset V(c,\epsilon)$.  This proves (ii).
\qed
 
\vspace{.2in}

  The following proof is an adaptation  of one that appears in \cite{mark}.

\vspace{.2in}

\noindent{\bf Proof of Theorem~\ref{pA Markovich} }  Let $G : \ti S \to \ti S$ be the unique lift of $g$ that   is equivariantly isotopic to the
identity; equivalently $G$ is the lift that  commutes with all covering translations $T$ of $\ti S$.
The commutator $[F,G]$, which must be a covering translation
since it is a lift of the identity on $S$, commutes with all covering translations and hence is the identity.  It follows that  $F$ and $G$
commute.

 Since $S$ is compact and  $\bar d(Q(G(Ty)),Q(Ty))  = \bar d(Q(G(y)),Q(y))$ for all $y \in \ti S$ and    all covering translations $T$,   there is a
constant $C'$ such that $\bar d(Q(G(y)),Q(y)) < C'$ for all $y \in \ti
S$.   It follows that
    
\begin{align*}
\bar d(QF^k(x), \bar A^k(\Theta(G(x)))
&\le \bar d(QF^k(x), QF^k(G(x))  +  \bar d(QF^k(G(x)),  \bar A^k\Theta( G(x)))\\
&= \bar d(QF^k(x), QG(F^k(x))  +  \bar d(QF^k(G(x)),  \bar A^k\Theta( G(x)))\\
& \le C' +    \bar d(QF^k(G(x)),  \bar A^k\Theta( G(x)))\\
&   \le C' + \overline{C_2}
\end{align*}
where $\overline{C_2}$ is the maximum of the constants produced by item (iii)  of Proposition~\ref{leaf_semiconj} applied to $\theta_s$ and to $\theta_u$.
The uniqueness part of Proposition~\ref{leaf_semiconj}-(iii) therefore
implies that $\Theta G = \Theta$.  In particular, $\Theta^{-1}(c,w)$
is $G$-invariant.  It suffices to show that each component of
$\Theta^{-1}(c,w)$ is $G$-invariant.

Choose $\epsilon_n \to 0$. Let $V_n = V(c,\epsilon_n)$ be as in
Lemma~\ref{epsilon neighborhood is nice} and let $H_n = H(w,\epsilon_n)$ be the
open set obtained by applying Lemma~\ref{epsilon neighborhood is nice} with
$\L^s$ replaced by $\L^u$ and $\theta_s$ replaced by $\theta_u$.  Then  $V_n \cap  H_n = \Theta^{-1}(N_{\epsilon}(c) \times N_{\epsilon}(w))$  is bounded by Lemma~\ref{bounded} and
\[
\Theta^{-1}(c,w)  = \cap_{n=1}^{\infty}(V_n \cap  H_n).
\]
Moreover if $\Lambda$ is a component of $\Theta^{-1}(c,w)$ and we let
$K_n$ be the component of $V_n \cap H_n$ containing $\Lambda$ then
$\overline{K_{n+1}} \subset K_n$ for all $n$ where
$\overline{K_{n+1}}$ denotes the closure of $K_{n+1}$.  In particular,
$$
\cap_{n=1}^{\infty} K_n = \cap_{n=1}^{\infty} \overline{K_{n+1}}.
$$ 
Since each  $\overline{K_{n+1}}$ is compact,  $\cap_{n=1}^{\infty} K_n$ is  non-empty and connected and hence equal to $\Lambda$.
 It therefore suffices to
show that each  $K_n$   is $G$-invariant.    In fact it is enough to prove that  $G(\bar K_n) \cap \bar K_n \ne
\emptyset$, because then
$G(\bar K_n)$ and $\bar K_n$ are both subsets of $K_{n-1}$ and
hence $G(K_{n-1}) = K_{n-1}$.

If there were infinitely many components of   $V_n \cap H_n$ that contain an element of $\Theta^{-1}(c,w)$ then there
would be a sequence $\{x_i\} \subset    \Theta^{-1}(c,w)$ converging to some $x \in
\Theta^{-1}(c,w)$ and with each $x_i$ in a
different component of $V_n \cap H_n$.  Since these components are
disjoint, $x$ is also an accumulation point of the frontiers of these
components.  Hence, since the frontier of a component of $V_n \cap
H_n$ is contained in the union of the frontier of $V_n$ and the
frontier of $H_n$ there is a sequence $\{y_i\}$ converging to $x$ with
each $y_i$ in the frontier of either $V_n$ or $H_n$.  This contradicts the  continuity of $\Theta$ and the fact that $\bar d(\Theta(y_i), \Theta(x_i)) = \epsilon_n > 0$ for all $i$.  
We conclude that there are only finitely many components of   $V_n \cap H_n$ that contains an element of $\Theta^{-1}(c,w)$.  Since $G$ commutes with $\Theta$, $G^i(K_n)$ is a such a component for all $i \ge 0$.  Thus  $G^m(K_n) = K_n$
  for some smallest $m\ge 1$.

Let $S^2$ denote the one point compactification of $\ti S$ obtained
by adding a point $\infty$.  The set $V_n \subset \ti S$ can be thought  of as a subset of $S^2$ and when we do so we refer to it simply as $V$.      By   Lemma~\ref{epsilon neighborhood is nice}, $V$ is    open, connected and simply
connected and so has a     prime 
end compactification $\widehat V$.   
 For our purposes the key properties 
are:

\begin{itemize}
\item [(i)]  $\widehat V$ is topologically a disk $D$ whose interior is identified with
$V$.
\item [(ii)] The function $G|_V$ extends continuously to a homeomorphism
$\widehat G : D \to D.$
\item[(iii)] For each continuous arc $\gamma: [0,1] \to  S^2$ with
$\gamma([0,1)) \subset V$ and $\gamma(1)$ in the frontier of $V$   there is a continuous arc $\widehat \gamma: [0,1] \to D$ with $\widehat
\gamma(t) = \gamma(t)$ for $t \in [0,1).$ The point $\gamma(1)$ is
called an {\em accessible point} of the frontier of $V$ and $\widehat \gamma(1)$
is a prime end corresponding to it (there may be more than one prime
end corresponding to an accessible point).
\item[(iv)] If $\gamma_t$ is a continuous one parameter family of arcs in $S^2$ as in (iii) and if $\gamma(1)$ is independent of $t$ then $\widehat \gamma(1)$ is also independent of $t$.    
\end{itemize}

Properties (i)-(iii) go back to Caratheodory.  An excellent
modern exposition can be found in Mather's paper \cite{M}. In
particular see \S 17 of \cite{M} for a discussion of accessible points
and \S 18 for property (iv).

A properly embedded ray $R$ in $\ti S$ that is contained in $V_n$
determines an arc $\gamma$ as in (iii) with $\gamma(1) = \infty$.
Considering the rays $R$ and $G(R)$ and applying item (ii) of
Lemma~\ref{epsilon neighborhood is nice} we obtain $\gamma$ and
$\widehat G(\gamma)$ to which Property (iv) applies. 
This implies that $\widehat\gamma$ and
$\widehat G(\widehat\gamma)$ converge to the same prime end
$\widehat{\infty}$ which is evidently fixed by $\widehat G$.

Now choose a properly embedded ray $R_1$ in $H_{n+1}$ with initial
endpoint in $K_n$ and note that $R_1$ is disjoint from the frontier of
$H_n$.  Since $K_n$ is bounded, $R_1$ intersects the frontier of $K_n$
in some first point $z$ which is necessarily in the frontier of $V_n$.
The initial segment $\gamma_1$ of $R_1$ that terminates at $z$ satisfies
the hypotheses of (iii) with $\gamma(1) = z$.  Let $\hat \gamma$ be
the associated path in $D$ and let $\hat z \in \partial D$ be the
prime end $\hat \gamma(1).$
 
Our proof is by contradiction.  Let $m$ be the smallest natural number
with $G^m(K_n) = K_n$ and assume  $m \ne 1$ and $G(\bar K_n) \cap \bar K_n =
\emptyset$.  Then $\hat z, \widehat G(\hat z), \widehat G^m (\hat
z)$ and $\widehat G^{m+1} (\hat z)$ are distinct and in this order on
$\partial D \setminus \{\widehat{\infty}\}$ (oriented so that $\hat z 
< \widehat G(\hat z)$). Since
$G^m(K_n) = K_n$ the initial endpoints of the  $\widehat \gamma_1$ and $\widehat G^m(\widehat\gamma_1)$ can be joined in $K_n$ to form   an arc $\widehat \beta$ with interior in $K_n$ and endpoints $\hat z$ and $\widehat G^m(\hat z)$.      Therefore $\widehat G(\widehat \beta)$ has one endpoint $\widehat G(\hat z)$ and the other
$\widehat G^{m+1}(\hat z)$.  It follows that $\widehat G(\widehat \beta) 
\cap \widehat \beta \ne \emptyset$ and indeed the points of intersection
must lie in the interior of these arcs     in
contradiction to the assumption that $G(K_n) \cap K_n = \emptyset$.
We conclude that $m = 1$ and hence that $G(K_n) = K_n$.  \endproof


\begin{thebibliography}{100}

\bibitem[B]{bers}
L. Bers,
{\em An extremal problem for quasiconformal mappings and a theorem by Thurston,}
 Acta Math. 141 (1978), no. 1-2, 73--98

\bibitem[Fa] {Fa}
A.~Fathi, Homotopical stability of pseudo-Anosov diffeomorphisms, 
Ergodic Theory Dynam. Systems, 10 (1990) 287--294

\bibitem[FLP] {FLP}
{A.~Fathi, F.~Laudenbach and V.~Poenaru,}
{\em Travaux de Thurston sur les surfaces,}
{Ast\'erisque} {\bf 66--67} (1979)

\bibitem[Fr] {franks:semiconjugacy}
 J. Franks,
{\em  Anosov diffeomorphisms,}
1970 Global Analysis
(Proc. Sympos. Pure Math., vol. XIV, Berkeley, Calif., 1968)  61--93, 
Amer. Math. Soc., Providence, R.I

\bibitem[FHP]{fhp:abel}
{ J.~Franks, M.~Handel and K.~Parwani}
\newblock {\em Fixed Points of abelian actions on $S^2$.}
\newblock{preprint}


\bibitem[H1]{han:semiconjugacy}
M. Handel,
{\em Global shadowing of pseudo-Anosov homeomorphisms,}
Ergodic Theory Dynam. Systems 5 (1985), no. 3, 373--377

\bibitem[H2]{han:entropy}
M. Handel,
{\em Entropy and semi-conjugacy in dimension two,}
Ergodic Theory Dynam. Systems 8 (1988), no. 4, 585--596

\bibitem[Ko]{kork}
\M. Korkmaz,
{\em Low dimensional homology of mapping class groups: a survey,}
Turkish J. Math. 26 (2002) , no. 1, 101--114

\bibitem[M]{M}
{\sc J.Mather:}
\newblock Topological proofs of some purely topological consequences of\\
 Caratheodory's Theory of prime ends,
\newblock in {\em Selected Studies}, Th. M. Rassias, G. M. Rassias, eds.,
\newblock {\em North-Holland} (1982), 225--255

\bibitem[Mark]{mark}
V. Markovic,
{\em Realization of the mapping class group by homeomorphisms,}
Invent. MAth. 168 (2007), no.3, 523--566

\bibitem[MS]{morganShalen}
J. W. Morgan and P. B. Shalen,
{\em Free actions of surface groups on $\R$-trees,}
Topology 30 (1991) no. 2  143--154

\bibitem[Mor1]{mor1}
S. Morita,
{\em Characteristic classes of surface bundles,}
Invent. Math. 90 (1987), 551--577

\bibitem[Mor2]{mor2}
S. Morita,
{\em Geometry of characteristic classes of surface bundles,}
Translations of Mathematical Monographs, 199
Iwanami Series in Modern Mathematics.  American Mathematical Society, (2001)

\bibitem[RHU]{RHU}
Federico Rodriguez Hertz, Jana Rodriguez Hertz, Raul Ures,\\
Dynamics in the isotopy class of a pseudo-Anosov map,\\
http://uk.arxiv.org/pdf/math.DS/0407369.pdf

\end{thebibliography}
\end{document}